\documentclass[11pt]{amsart}
\usepackage{amsmath,amssymb,amsthm}
\usepackage{mathtools}
\usepackage{booktabs}
\usepackage[hyphens]{url}
\usepackage{hyperref}
\hypersetup{hidelinks}
\usepackage{tikz}

\newtheorem{theorem}{Theorem}
\newtheorem{lemma}[theorem]{Lemma}
\newtheorem{conjecture}[theorem]{Conjecture}
\theoremstyle{remark}
\newtheorem{remark}[theorem]{Remark}

\DeclareMathOperator{\conv}{conv}
\DeclareMathOperator{\area}{area}
\DeclareMathOperator{\width}{width}

\title{The maximum area of the convex hull of a polyhex}
\author{Pragyaan Gaur}
\email{pragyaangaur12@gmail.com}
\date{September 22, 2026}
\subjclass[2020]{05B50, 52A38, 52C20}
\keywords{polyhex, polyomino, convex hull, extremal problem, formal verification, Lean}

\begin{document}

\begin{abstract}
A polyhex is an edge-connected set of $n$ cells of the regular hexagonal tiling, where each cell
has area one. We prove that the convex hull of a polyhex has area at most
$\frac16\lceil n^2+\frac{14}{3}n\rceil$, and we show that some polyhex reaches this bound for
every $n$. This proves a conjecture of Kurz from 2008, which asked for the weaker bound
$\frac16\lfloor n^2+\frac{14}{3}n+1\rfloor$. The two bounds differ exactly when $3$ divides $n$.
We checked the upper bound in the Lean~4 proof assistant with the Mathlib library. We also report
a computation over all polyhexes with at most $12$ cells, which shows that for these sizes only one
shape reaches the maximum, up to rotation and reflection.
\end{abstract}

\maketitle

\section{Introduction}

Take $n$ congruent regular polygons in the plane and join them edge to edge so that their union is
connected. We ask for the largest possible area of the convex hull of the union. Bezdek, Bra\ss{}
and Harborth~\cite{BBH} studied this question for segments and for unit squares. For polyominoes
in the plane they found the maximum
$n+\frac12\lfloor\frac{n-1}{2}\rfloor\lfloor\frac n2\rfloor$, and they conjectured a formula in
every dimension. Kurz~\cite{Kurz} proved their conjecture in all dimensions and counted the planar
polyominoes that reach the maximum. At the end of his paper he stated the following conjecture for
hexagons.

\begin{conjecture}[Kurz {\cite[Conjecture~2]{Kurz}}]\label{conj:kurz}
The area of the convex hull of any edge-to-edge connected system of $n$ regular unit hexagons is
at most $\frac16\lfloor n^2+\frac{14}{3}n+1\rfloor$.
\end{conjecture}

Here a unit hexagon has area one. The case $n=1$ shows that this is the intended normalisation,
because the bound equals $1$ there.

Steffanov\'a~\cite{Steffanova} studied the same family of questions for polyiamonds and polyhexes.
She solved the case of triangles and described a polyhex shape that she called the washtub. She
reports that she could not prove the upper bound for polyhexes, because a hexagon can enlarge the
hull in more ways than a square can~\cite[Section~3.2.3]{Steffanova}. As far as we know, the case
of hexagons has remained open. Our main result gives the exact maximum.

\begin{theorem}\label{thm:main}
Let $M(n)$ be the largest area of the convex hull of a polyhex with $n$ cells. Then
\[
M(n)=\frac16\left\lceil n^2+\frac{14}{3}n\right\rceil
=\frac16\left(\left\lfloor n^2+\frac{14}{3}n+1\right\rfloor-[\,3\mid n\,]\right).
\]
\end{theorem}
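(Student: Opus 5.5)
Theorem~\ref{thm:main} splits into a construction for the lower bound and an upper bound, and I would prove them separately. Write $P$ for the union of the $n$ cells and $H=\conv(P)$. Throughout I use coordinates in which the hexagon centres form a triangular lattice $\Lambda$ and every cell has area one. Then all hull vertices lie in a finer lattice of cell vertices, so $6\area(H)$ is an integer. This integrality is what will turn a real-valued bound into the ceiling formula. The two closed forms in the statement agree by a check on $n \bmod 3$. If $3\nmid n$, then $n^2+\frac{14}{3}n$ has fractional part $\frac23$ or $\frac13$, and both forms give the same integer. If $3\mid n$, the quantity is an integer and the two forms differ by one.

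\textbf{Lower bound.} I would take a washtub-like shape in the spirit of Steffanov\'a. It consists of a straight row of $a$ cells along one lattice direction, with two arms of lengths $b$ and $c$ leaving its ends in the two directions at $60^\circ$ to it, so that $a+b+c$ is essentially $n$. The hull is then a trapezoid or pentagon with a thin fringe. I would compute its area exactly as the area of the polygon through the centres plus a boundary correction: every centre-to-centre segment on $\partial H$ adds a strip of fixed width, and every turn adds a fixed corner term. The resulting quadratic in $a,b,c$ is maximised with the three parts as equal as possible, which gives $n\approx 3k+r$. I would then check the three residues separately and confirm that the value is exactly $\frac16\lceil n^2+\frac{14}{3}n\rceil$. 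If the formula is off by a constant in one residue class, I would adjust by moving one cell to a corner.

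\textbf{Upper bound.} Let $Q=\conv(\text{centres})$. Then $H=Q+h$ up to corrections, where $h$ is the hexagon. More precisely, $\area(H)\le \area(Q)+\frac{1}{2}\cdot(\text{mixed term})+1$, and the mixed term is linear in the support widths of $Q$ in the six lattice directions. So I would bound $\area(Q)$ by lattice widths. Fix the three lattice directions and let $w_1,w_2,w_3$ be the number of distinct lattice lines in each family that meet the centres. Connectivity gives $w_1+w_2+w_3\le n+2$, because each cell added to a connected set enlarges at most one width by at most one, and for adjacent centres exactly two of the three coordinates change. Next, $Q$ lies in the hexagon cut out by the three strips. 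Its area is a quadratic in the $w_i$ together with the three ``corner'' widths of the opposite orientation, and it is maximised under the sum constraint when the widths are balanced. The $\frac{14}{3}n$ term should come from combining the perimeter contribution (Minkowski term) with the linear parts. Finally, taking $\lceil\cdot\rceil$ of $6\area(H)$ gives the bound. When $3\mid n$, the real bound is itself an integer, so I would need a parity or lattice argument to show that equality in the continuous relaxation is not achieved. This is the step that upgrades Kurz's floor to the ceiling.

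\textbf{Main obstacle.} The hard step is the upper bound on $\area(Q)$ plus its boundary correction in terms of only $n$. The hexagon-with-strips bound treats all six hull sides symmetrically, but the connectivity constraint couples them only through $w_1+w_2+w_3\le n+2$. Steffanov\'a notes the same difficulty: a new cell can extend the hull diagonally in ways a square cannot. I would first try induction on $n$. I would remove a leaf cell of a spanning tree that is extreme in some direction, and show that $\area(H)$ drops by at most $\frac16(2n+\frac{11}{3})$ plus a rounding term. This reduces the problem to bounding the area of the triangle or trapezoid added by one extreme cell in terms of the current width, which is at most about $n/3$ in the relevant direction, provided the leaf is chosen in the direction of smallest width. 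If that induction loses constants, I would fall back on the width optimisation above and handle the $3\mid n$ equality case by an explicit analysis of configurations with all three widths equal.
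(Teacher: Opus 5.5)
Your integrality remark and your lower-bound shape are fine, and the shape is the paper's washtub: three straight runs $a$, $b$, $c$ in the directions $(0,1)$, $(1,0)$, $(1,-1)$. Its centre hull has area $\frac12(ab+bc+ca)$ and width sum $3(n-1)+b$. No cell has to be moved. You only need the middle run to be a longest one; otherwise you lose $\frac13$ whenever $3\nmid n-1$.

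The upper bound has a genuine gap. The inequality $w_1+w_2+w_3\le n+2$ is false. The two cells $(0,0),(1,0)$ already meet $1+2+2=5$ lattice lines, a straight row of $n$ cells meets $2n+1$, and even the washtub meets about $\frac53 n$. Your own remark that a step changes two of the three cube coordinates contradicts ``enlarges at most one width''. These line counts also measure the wrong thing: they give extent perpendicular to the lattice directions. The Minkowski term uses the widths along the lattice directions, i.e.\ of $q+2r$, $q-r$, $2q+r$, which change by $1,1,2$ (in some order, up to sign) along a step. Step by step these widths only sum to at most $4(n-1)$. The bound actually needed is $3(n-1)+m$, where $m$ is the largest number of parallel spanning-tree edges. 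It requires an extra observation: the tree edges lying on all three extremal paths with all three increments positive are pairwise parallel.

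More seriously, no optimisation over widths alone can reach the right constant. Use the correct step-by-step constraints: the ranges of $q$, $r$, $q+r$ sum to at most $2(n-1)$, and the three widths above sum to at most $4(n-1)$. Your balanced strip hexagon $\{0\le q\le W,\ 0\le r\le W,\ W/2\le q+r\le 3W/2\}$ with $W=\frac23(n-1)$ satisfies both. Yet its area is $\frac13(n-1)^2$, twice the true leading term $\frac16(n-1)^2$.

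The missing idea is to bound $\area(Q)$ through connectivity directly. Build $Q$ along a spanning tree. Adding $x+v$ to a convex set $K\ni x$ raises the area by at most $\frac12\width_{\det(v,\cdot)}(K)$. This width is at most the number of earlier tree edges not parallel to $v$. The result is $\area(Q)\le\frac12(ab+bc+ca)$, where $a,b,c$ count the tree edges in each direction; these are the same parameters that control the widths. An elementary quadratic inequality in $a,b,c$ then shows that six times the hull area is at most $n^2+\frac{14}{3}n+\frac23$. Integrality gives the ceiling in every residue class at once, so no special argument for $3\mid n$ is needed.

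Your induction fallback is not a proof either. The existence of a removable cell costing at most $M(n)-M(n-1)=\frac16(2n+4-[\,3\mid n\,])$ is essentially the whole difficulty, and you give no argument for it.
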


The second form holds because $n^2+\frac{14}{3}n$ is an integer exactly when $3\mid n$. In
particular Conjecture~\ref{conj:kurz} is true. The conjectured bound equals $M(n)$ when $3\nmid n$,
and it is larger than $M(n)$ by $\frac16$ when $3\mid n$. For example $M(3)=\frac{23}{6}$, while
the conjectured bound is $4$. The first values of $6M(n)$ are
\[
6,\ 14,\ 23,\ 35,\ 49,\ 64,\ 82,\ 102,\ 123,\ 147,\ 173,\ 200,
\]
and they form sequence A399934 in the OEIS~\cite{OEIS}. Figure~\ref{fig:washtub} shows a polyhex
with $10$ cells that reaches the maximum.

\begin{figure}[ht]
\centering
\begin{tikzpicture}[scale=0.8]
\draw[thick,fill=gray!12] (-0.5000,-0.2887) -- (0.0000,-0.5774) -- (6.0000,-0.5774) -- (6.5000,-0.2887) -- (6.5000,0.2887) -- (6.0000,1.1547) -- (5.0000,2.8868) -- (4.5000,3.1754) -- (1.5000,3.1754) -- (1.0000,2.8868) -- (0.0000,1.1547) -- (-0.5000,0.2887) -- cycle;
\foreach \x/\y in {0.0000/0.0000, 0.5000/0.8660, 1.0000/1.7321, 1.5000/2.5981, 2.5000/2.5981, 3.5000/2.5981, 4.5000/2.5981, 5.0000/1.7321, 5.5000/0.8660, 6.0000/0.0000}
  \draw[fill=white] (\x,\y) ++(30:0.5774) -- ++(150:0.5774) -- ++(210:0.5774) -- ++(270:0.5774) -- ++(330:0.5774) -- ++(30:0.5774) -- cycle;
\end{tikzpicture}
\caption{The figure shows a polyhex with $10$ cells that reaches the maximum. Its convex hull is
shaded and has area $\frac{49}{2}$.}
\label{fig:washtub}
\end{figure}

The proof has three steps. In Section~\ref{sec:setup} we write the hull as a Minkowski sum $P+H$,
where $P$ is the hull of the cell centres and $H$ is one cell. We then express its area through
the area of $P$ and three widths of $P$. In Section~\ref{sec:tree} we bound the area of $P$ and
the widths of $P$ with a spanning tree of the polyhex. The area of $P$ depends on the pairs of tree
edges that are not parallel. The widths depend on the number of tree edges and on the largest
number of parallel tree edges. In Section~\ref{sec:proof} we combine the bounds and give a
polyhex that reaches them. Section~\ref{sec:lean} describes the formal proof, and
Section~\ref{sec:computation} reports the computations. The last section lists open problems.

\section{Coordinates and the area identity}\label{sec:setup}

The centres of the hexagonal tiling form a triangular lattice. We use axial coordinates. In these
coordinates the centres are the points of $\mathbb Z^2$, and the six neighbours of a centre are
obtained by adding $\pm(1,0)$, $\pm(0,1)$ or $\pm(1,-1)$. The linear map
$(q,r)\mapsto\kappa\,(q+\tfrac r2,\tfrac{\sqrt3}{2}r)$ with $\kappa^2=2/\sqrt3$ has determinant
one, and it carries this picture to a regular hexagonal tiling in the Euclidean plane. A linear
map multiplies all areas by the same factor, so we may compute in axial coordinates. There the cell
with centre at the origin is
\[
H=\conv\{\pm(\tfrac13,\tfrac13),\ \pm(-\tfrac13,\tfrac23),\ \pm(\tfrac23,-\tfrac13)\},
\]
and it has area one.

Any edge-to-edge connected system of congruent regular hexagons is part of a single hexagonal
tiling. After a similarity it is therefore $S+H$ for a finite edge-connected set
$S\subset\mathbb Z^2$. Since $H$ is convex,
\[
\conv(S+H)=\conv(S)+H.
\]
From now on we write $P=\conv(S)$.

For a linear functional $f$ and a compact set $K$, let $\width_f(K)=\max_K f-\min_K f$. We use the
three functionals
\[
f_1(q,r)=q+2r,\qquad f_2(q,r)=q-r,\qquad f_3(q,r)=2q+r,
\]
and we write $w_i=\width_{f_i}(P)$. Let $\det(u,v)=u_1v_2-u_2v_1$.

\begin{lemma}\label{lem:segment}
Let $K$ be a compact convex set in the plane and let $g$ be a vector. Then
\[
\area(K+[0,g])=\area(K)+\width_{\det(g,\cdot)}(K).
\]
\end{lemma}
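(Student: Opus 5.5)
We will prove the identity by a Cavalieri-type slicing argument in the direction of $g$. If $g=0$ both sides equal $\area(K)$, because $\det(0,\cdot)\equiv 0$ has width zero. So assume $g\neq 0$. Since the statement is invariant under an area-preserving linear change of coordinates (such a map $A$ satisfies $\det(Ag,Ax)=\det(g,x)$), we may take $g=(0,c)$ with $c>0$. Then $\det(g,(x,y))=-cx$, so $\width_{\det(g,\cdot)}(K)=c\,(b-a)$, where $[a,b]$ is the projection of $K$ onto the first coordinate axis.

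The key step is the vertical slicing. For each $x\in[a,b]$ the slice $K_x=\{y:(x,y)\in K\}$ is a nonempty compact interval $[\alpha(x),\beta(x)]$, because $K$ is compact and convex. The slice of $K+[0,g]$ at abscissa $x$ is $\{y+tc: y\in K_x,\ t\in[0,1]\}$. This holds because adding a vertical segment does not change the first coordinate. That slice equals $[\alpha(x),\beta(x)+c]$, with length $(\beta-\alpha)(x)+c$. Slices outside $[a,b]$ are empty for both sets. Fubini's theorem (Cavalieri's principle) then gives
\[
\area(K+[0,g])=\int_a^b\bigl(\beta(x)-\alpha(x)+c\bigr)\,dx=\area(K)+c\,(b-a),
\]
which is the claimed identity once we substitute the width computed above.

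The only delicate point is the measurability and integrability needed for Fubini. Here $\alpha$ is convex and $\beta$ is concave on $[a,b]$, and both are bounded, so they are measurable and the integrals make sense. Compact convex sets are Lebesgue measurable, so Fubini applies directly. One could also avoid integration altogether. In that approach, $K+[0,g]$ is the union of $K$, $K+g$ and the parallelogram-like region swept between the two chains of $\partial K$ whose outward normals lie in the two half-planes cut out by $g$. That route, however, needs a more careful case analysis of the supporting lines parallel to $g$, so the slicing argument is the one I would write down.
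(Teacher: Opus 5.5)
Your proof is correct and follows the paper's argument: normalize $g$ by a determinant-one linear map, slice along lines parallel to $g$, observe that each nonempty slice of $K$ is lengthened by exactly the length of the image of $g$, and integrate. The only difference is cosmetic: you send $g$ to $(0,c)$ and slice vertically, while the paper sends $g$ to $(\lambda,0)$ and slices horizontally.
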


\begin{proof}
The case $g=0$ is clear. Otherwise apply a linear map of determinant one that sends $g$ to
$(\lambda,0)$ with $\lambda>0$. This map turns $\det(g,\cdot)$ into $\lambda$ times the second
coordinate. A horizontal line meets $K+[0,g]$ exactly when it meets $K$. In that case it meets $K$
in a segment, and it meets $K+[0,g]$ in the same segment extended to the right by $\lambda$. We
integrate the extra length $\lambda$ over the second coordinate and obtain the formula.
\end{proof}

\begin{lemma}\label{lem:area}
For every nonempty convex polygon $P$, including a point or a segment,
\[
\area(P+H)=\area(P)+1+\frac{w_1+w_2+w_3}{3}.
\]
\end{lemma}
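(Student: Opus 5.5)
The plan is to write the hexagon $H$ as a Minkowski sum of three segments, up to translation, and apply Lemma~\ref{lem:segment} three times. Translating $P$ or $H$ changes neither the area of $P+H$ nor any width, so translations can be ignored throughout.

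First I identify the generators. In counterclockwise order the vertices of $H$ are
\[
(\tfrac23,-\tfrac13),\ (\tfrac13,\tfrac13),\ (-\tfrac13,\tfrac23),\ (-\tfrac23,\tfrac13),\ (-\tfrac13,-\tfrac13),\ (\tfrac13,-\tfrac23).
\]
Three consecutive edge vectors are
\[
a=(-\tfrac13,\tfrac23),\qquad b=(-\tfrac23,\tfrac13),\qquad c=(-\tfrac13,-\tfrac13).
\]
The opposite edges are their negatives. Hence $H$ is the centrally symmetric zonotope
\[
H=(\tfrac23,-\tfrac13)+[0,a]+[0,b]+[0,c].
\]
As a check, $a+b+c=(-\tfrac43,\tfrac23)$, which is the difference between the vertices $(-\tfrac23,\tfrac13)$ and $(\tfrac23,-\tfrac13)$.

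Next I record the relevant functionals. A direct computation gives
\[
\det(a,\cdot)=-\tfrac13 f_3,\qquad \det(b,\cdot)=-\tfrac13 f_1,\qquad \det(c,\cdot)=\tfrac13 f_2.
\]
Widths ignore sign, so on $P$ these have widths $w_3/3$, $w_1/3$ and $w_2/3$.

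Then I add the segments one at a time using Lemma~\ref{lem:segment}, starting from $K_0=P$.
\begin{itemize}
\item Adding $[0,a]$ gives area $\area(P)+w_3/3$.
\item Adding $[0,b]$ needs the width of $P+[0,a]$ under $\det(b,\cdot)$. Widths are additive under Minkowski sums, and a segment $[0,g]$ has width $|\det(b,g)|$. So this width is $w_1/3+|\det(b,a)|$, and $|\det(b,a)|=\tfrac13$.
\item Adding $[0,c]$ similarly contributes $w_2/3+|\det(c,a)|+|\det(c,b)|$, and both determinants have absolute value $\tfrac13$.
\end{itemize}
Summing, the constant terms total $\tfrac13+\tfrac13+\tfrac13=1$. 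This gives
\[
\area(P+H)=\area(P)+1+\frac{w_1+w_2+w_3}{3}.
\]
As a sanity check, taking $P$ to be a point gives $\area(H)=1$.

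The degenerate cases need no separate treatment. Lemma~\ref{lem:segment} holds for any compact convex $K$, so every step works when $P$ is a point or a segment. The only places needing care are the additivity of widths, $\width_f(K+L)=\width_f(K)+\width_f(L)$, and the determinant arithmetic. I expect the main obstacle to be purely bookkeeping: getting the orientation and signs right so that each generator pairs with the correct $f_i$.
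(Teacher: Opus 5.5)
Your proof is correct and follows the paper's argument. The paper also writes $H$ as a translate of a sum of three segments (your $a,b,c$ are the paper's $g_3,-g_1,-g_2$) and adds them one at a time with Lemma~\ref{lem:segment}, using additivity of widths and $|\det(g_i,g_j)|=\frac13$; the paper simply checks the decomposition by listing the eight vertex sums instead of citing central symmetry, and adds the segments in a different order.
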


\begin{proof}
Put $z=(-\frac13,-\frac13)$, $g_1=(\frac23,-\frac13)$, $g_2=(\frac13,\frac13)$ and
$g_3=(-\frac13,\frac23)$. The eight sums $z+\epsilon_1g_1+\epsilon_2g_2+\epsilon_3g_3$ with
$\epsilon_i\in\{0,1\}$ are the six vertices of $H$ and the origin, which is the centre of $H$.
Hence $H=z+[0,g_1]+[0,g_2]+[0,g_3]$. We have $\det(g_1,\cdot)=f_1/3$, $\det(g_2,\cdot)=-f_2/3$ and
$\det(g_3,\cdot)=-f_3/3$. We also have $|\det(g_i,g_j)|=\frac13$ for $i\ne j$. Widths add under
Minkowski sums. We add the three segments one at a time with Lemma~\ref{lem:segment} and get
\[
\area(P+H)=\area(P)+\frac{w_1}{3}+\Bigl(\frac{w_2}{3}+\frac13\Bigr)+\Bigl(\frac{w_3}{3}+\frac23\Bigr).
\qedhere
\]
\end{proof}

\section{Spanning tree estimates}\label{sec:tree}

Let $S$ be a polyhex with $n$ cells and put $s=n-1$. Fix a spanning tree of the adjacency graph of
$S$. Let $a$, $b$ and $c$ be the numbers of its edges parallel to $(1,0)$, $(0,1)$ and $(1,-1)$.
Then $a+b+c=s$. Put $m=\max(a,b,c)$.

\begin{lemma}\label{lem:insert}
Let $K$ be a compact convex set, let $x\in K$ and let $v$ be a vector. Then
\[
\area(\conv(K\cup\{x+v\}))\le\area(K)+\tfrac12\width_{\det(v,\cdot)}(K).
\]
\end{lemma}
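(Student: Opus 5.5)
The plan is to write the new hull as $K$ together with a union of triangles and bound the total area of these triangles by one integral over $K$. Put $y=x+v$ and $L=\conv(K\cup\{y\})$. If $y\in K$ there is nothing to prove, so assume $y\notin K$. Since $L$ is the union of all segments $[k,y]$ with $k\in K$, we have
\[
L=K\cup\bigcup_{k\in K}[k,y].
\]
The extra region $L\setminus K$ is contained in the union of the triangles $\conv\{k,k',y\}$ with $k,k'\in K$.

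Next I apply a linear map of determinant one, as in the proof of Lemma~\ref{lem:segment}. For $v\neq 0$ it sends $v$ to $(\lambda,0)$ with $\lambda>0$. Then $\det(v,\cdot)$ becomes $\lambda$ times the second coordinate, so $\width_{\det(v,\cdot)}(K)=\lambda h$, where $h$ is the height of $K$ in the second coordinate. Areas and convexity are preserved, so it is enough to prove $\area(L)\le\area(K)+\frac12\lambda h$.

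The main step compares horizontal slices. I claim that $L$ is contained in $K+[0,v]$ intersected with the horizontal strip of $K$, i.e. that $L\subseteq K\cup T$, where $T=\conv\{x,x+v\}\cup\ldots$ is built from the segment $[x,y]$. A cleaner route is to decompose $L$ as the union of $\conv(K\cup\{y\})$ over the two tangent points. Let $p$ and $p'$ be the points where the two tangent lines from $y$ touch $K$. Then $L\setminus K$ is the region bounded by $[p,y]$, $[y,p']$ and the boundary arc of $K$ between $p$ and $p'$ that faces $y$. This region lies inside the triangle $\conv\{p,p',y\}$ minus $\conv\{p,p',x\}$, because $x\in K$ and $\conv\{p,p',x\}\subseteq K$. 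Hence
\[
\area(L)-\area(K)\le\area\conv\{p,p',y\}-\area\conv\{p,p',x\}.
\]
To justify this I need care about the side of the chord $[p,p']$ on which $x$ lies. If $x$ is on the same side as $y$, the triangles overlap and the difference is still the area of the part swept. In either case the difference of signed areas equals $\frac12|\det(p'-p,v)|$, since $y-x=v$. When $x$ and $y$ lie on the same side of the line through $p,p'$, we have
\[
\tfrac12\det(p'-p,y-p)-\tfrac12\det(p'-p,x-p)=\tfrac12\det(p'-p,v).
\]
Checking that $L\setminus K\subseteq\conv\{p,p',y\}\setminus\conv\{p,p',x\}$ holds in this same-side case is the point I expect to be the main obstacle. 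In the opposite-side case I would instead bound by $\area\conv\{p,p',y\}$ and use that the distance from $y$ to the chord is at most its distance from $x$. Alternatively, replace $x$ by the point of $K$ on the segment $[x,y]$ nearest to $y$, which puts it on $\partial K$ and avoids the case split.

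Finally, $\frac12|\det(p'-p,v)|=\frac12|\det(v,p'-p)|$. Since $p,p'\in K$, this is at most $\frac12\bigl(\max_K\det(v,\cdot)-\min_K\det(v,\cdot)\bigr)=\frac12\width_{\det(v,\cdot)}(K)$, which is the claimed inequality. The degenerate cases, where $K$ is a point or a segment or $p=p'$, follow directly because the added region is then a triangle or a segment.
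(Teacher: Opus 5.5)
Your tangent-triangle argument is a different route from the paper's and can be completed. As written, though, it has a gap, and the gap is not where you put it. Write $T_y=\conv\{p,p',y\}$ and $T_x=\conv\{p,p',x\}$.

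The containment you call the main obstacle, $L\setminus K\subseteq T_y\setminus T_x$, follows at once from $L\setminus K\subseteq T_y$ and $T_x\subseteq K$, and you already state both. What the same-side case actually needs is $T_x\subseteq T_y$. Only then is $\area(T_y\setminus T_x)$ equal to $\area(T_y)-\area(T_x)$, and hence to $\frac12|\det(p'-p,v)|$. Without it, $\area(T_y\setminus T_x)$ can be larger, and your sentence `the difference is still the area of the part swept' is unjustified.

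The missing fact is this. $K$ lies in the cone $C=y+\{\,s(p-y)+t(p'-y):s,t\ge0\,\}$ cut out by the two support lines through $y$. The part of $C$ on the closed side of the line $pp'$ containing $y$ is exactly $T_y$. So every point of $K$ on $y$'s side of the chord lies in $T_y$. This gives $T_x\subseteq T_y$, and also $d(x)\le d(y)$, where $d$ is the distance to the line $pp'$.

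The same cone argument does two more jobs:
\begin{itemize}
\item A ray from $y$ into $C$ meets $[p,p']\subseteq K$, so it enters $K$ no later than that crossing. This proves your unproved claim $L\setminus K\subseteq T_y$.
\item It shows that the replacement point $x'$ in your alternative lies in $T_y$. Lying on $\partial K$ alone would not be enough.
\end{itemize}

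With this added, the rest is right. In the opposite-side case, $\area(T_y)=\frac12|p'-p|\,d(y)\le\frac12|p'-p|\,(d(x)+d(y))=\frac12|\det(p'-p,v)|$. Also $|\det(v,p'-p)|\le\width_{\det(v,\cdot)}(K)$ because $p,p'\in K$. If a support line through $y$ meets $K$ in a segment, any point of that segment can serve as $p$. You should drop the abandoned first attempt: $L\subseteq K+[0,v]$ only gives $\area(K)+\width_{\det(v,\cdot)}(K)$ and loses the factor $\frac12$. You should also drop the determinant-one normalisation, which your final argument never uses.

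The paper instead normalises $x=0$ and $v=(1,0)$ and works slice by slice. It writes a point of the new hull as $(1-\lambda)k+\lambda v$ and uses $(1-\lambda)k\in K$. From this it shows that each horizontal slice grows by at most the tent function $h$, whose integral is $\frac12(y_+-y_-)$. That proof needs only convexity and $x\in K$, with no support lines or tangent points. It treats points and segments without extra cases, and it matches the single slicing lemma used in the Lean development. Your argument is more synthetic and gives the sharper intermediate bound $\frac12|\det(v,p'-p)|$, which involves only the two tangent points rather than the full width. The price is that it relies on the support-line structure of $K$ seen from an outside point, which is exactly where the extra justification above is needed.
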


\begin{proof}
The case $v=0$ is clear. Otherwise translate so that $x=0$, and apply a linear map of determinant
one that sends $v$ to $(1,0)$. The functional $\det(v,\cdot)$ becomes the second coordinate $y$.
Let $y_+=\max_K y$ and $y_-=\min_K y$. Since $0\in K$, we have $y_-\le0\le y_+$. Define $h$ on
$[y_-,y_+]$ by $h(y)=1-y/y_+$ for $y>0$, by $h(0)=1$, and by $h(y)=1-y/y_-$ for $y<0$.

Every point of $\conv(K\cup\{v\})$ has the form $p=(1-\lambda)k+\lambda v$ with $k\in K$ and
$\lambda\in[0,1]$. The point $q=(1-\lambda)k$ lies in $K$, because $K$ is convex and contains the
origin. The points $p$ and $q$ lie on the same horizontal line, and $p=q+(\lambda,0)$. Let $y=p_2$.
If $y>0$, then $y\le(1-\lambda)y_+$, so $\lambda\le h(y)$. The same bound holds for $y<0$ by the
same argument, and it holds for $y=0$ because $\lambda\le1$.

Fix a horizontal line at height $y$, and let $p$ be a point of $\conv(K\cup\{v\})$ on this line that
is not in $K$. Take $q$ as above. If some point of $K$ on this line were to the right of $p$, then
$p$ would lie between two points of $K$ and so would be in $K$. So $p$ lies to the right of the
segment $K\cap\{y\}$, at distance at most $h(y)$ from its right end. The new part of the line
therefore has length at most $h(y)$. Integrating over $y\in[y_-,y_+]$ gives at most
$\frac12(y_+-y_-)$.
\end{proof}

\begin{lemma}\label{lem:treearea}
The area of $P$ is at most $\frac12(ab+bc+ca)$.
\end{lemma}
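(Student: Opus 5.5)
We prove the bound by induction on the number of vertices, growing the spanning tree one leaf at a time and estimating the area added at each step with Lemma~\ref{lem:insert}. Order the vertices of the tree so that every prefix induces a subtree; for instance, take the order in which a breadth-first search from any root visits them. Let $T_k$ be the subtree on the first $k$ vertices, with edge counts $a_k,b_k,c_k$ in the three directions, and let $P_k$ be the convex hull of its vertices. For $k=1$ the hull $P_1$ is a point, so its area is $0$ and the bound holds. We want to show that $\area(P_k)\le\frac12(a_kb_k+b_kc_k+c_ka_k)$ for every $k$. Taking $k=n$ gives the lemma, because $P_n=P$.

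For the inductive step, the new vertex is $x+v$, where $x$ is a vertex of $T_{k}$ and $v$ is one of $\pm(1,0),\pm(0,1),\pm(1,-1)$. Lemma~\ref{lem:insert} then gives
\[
\area(P_{k+1})\le\area(P_k)+\tfrac12\width_{\det(v,\cdot)}(P_k).
\]
Suppose $v=\pm(1,0)$. Then $\det(v,\cdot)=\pm r$, so the width in question is the spread of the $r$-coordinate over the vertices of $T_k$. Any two vertices are joined by a path in $T_k$. Along that path, an edge parallel to $(1,0)$ leaves $r$ unchanged, and an edge parallel to $(0,1)$ or $(1,-1)$ changes $r$ by exactly $\pm1$. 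Hence the $r$-spread is at most $b_k+c_k$. The same argument applies in the other two directions. For $v=\pm(0,1)$ we have $|\det(v,\cdot)|=|q|$, and $q$ moves only along edges parallel to $(1,0)$ and $(1,-1)$, so the width is at most $a_k+c_k$. For $v=\pm(1,-1)$ we have $|\det(v,\cdot)|=|q+r|$. Edges parallel to $(1,0)$ and $(0,1)$ change $q+r$ by $\pm1$, and edges parallel to $(1,-1)$ leave it fixed, so the width is at most $a_k+b_k$. The bound on the widths therefore only needs a path-length count in the tree.

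Now combine this with the induction hypothesis. If the new edge is parallel to $(1,0)$, then $a_{k+1}=a_k+1$ and $b,c$ are unchanged. The symmetric function $ab+bc+ca$ then increases by exactly $b_k+c_k$, so its half increases by $\frac12(b_k+c_k)$. This is exactly the upper bound we have for the area increment, and the two other directions work the same way. The induction therefore closes with equality of the increments, so no slack is needed.

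The only delicate point is that Lemma~\ref{lem:insert} requires $x\in K$. Here $K=P_k$ and $x$ is a vertex of $T_k$, so $x\in P_k$ automatically. We must also make sure that the width is taken over $P_k$, which is the hull of the tree vertices, and not over some larger set. A linear functional attains its extreme values over a convex hull at the generating points, so the width of $P_k$ equals the spread over the vertices of $T_k$, and the path argument applies. I expect no real obstacle; the main care is in checking the sign and orientation of $\det(v,\cdot)$ for each direction in axial coordinates.
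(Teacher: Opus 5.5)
Your proof is correct and follows the paper's argument. You insert the vertices so that each one's tree parent comes earlier, apply Lemma~\ref{lem:insert} at each step, and bound the relevant width by the number of earlier tree edges not parallel to the new edge. The only differences are cosmetic: you bound that width through paths in $T_k$ instead of growing the range one vertex at a time, and you track the increment of $\frac12(a_kb_k+b_kc_k+c_ka_k)$ by induction instead of summing $N_j$ and counting non-parallel pairs.
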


\begin{proof}
List the vertices of the tree as $x_0,x_1,\dots,x_s$ so that each $x_j$ with $j\ge1$ has its parent
$x_{p(j)}$ among the earlier vertices. Let $e_j=x_j-x_{p(j)}$ and $K_j=\conv\{x_0,\dots,x_j\}$. For
unit lattice steps, $|\det(e_j,e_i)|$ is $0$ if $e_i$ and $e_j$ are parallel and $1$ if they are
not.

Fix $j\ge1$ and let $N_j$ be the number of indices $1\le i<j$ such that $e_i$ is not parallel to
$e_j$. We claim that the values $\det(e_j,x_i)$ for $i<j$ lie in an interval of length at most
$N_j$. We add the points $x_0,x_1,\dots$ in order. The value at $x_i$ differs from the value at its
parent by $|\det(e_j,e_i)|$, so each new point makes the range longer by at most that amount. By
convexity the same interval contains all values of $\det(e_j,\cdot)$ on $K_{j-1}$.

Lemma~\ref{lem:insert} with $K=K_{j-1}$, $x=x_{p(j)}$ and $v=e_j$ gives
$\area(K_j)\le\area(K_{j-1})+\frac12N_j$. Since $K_0$ is a point, $\area(P)\le\frac12\sum_jN_j$.
Each unordered pair of tree edges that are not parallel is counted exactly once in this sum, so
$\sum_jN_j=ab+bc+ca$.
\end{proof}

\begin{lemma}\label{lem:width}
The widths satisfy $w_1+w_2+w_3\le3s+m$.
\end{lemma}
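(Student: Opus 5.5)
Each width $w_i$ is the range of the linear functional $f_i$ over the vertices $x_0,\dots,x_s$ of the spanning tree, because the maximum and minimum of a linear functional on $P=\conv(S)$ are attained at points of $S$. The plan is to bound each range by the sum of $|f_i(e)|$ over the tree edges $e$, and then to sum over $i$.

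\emph{Step 1: path bound.} Let $x_k$ and $x_l$ be the vertices where $f_i$ is largest and smallest. The tree path between them joins them, so $w_i=f_i(x_k)-f_i(x_l)$ is at most the sum of $|f_i(e)|$ over the edges of that path. Hence $w_i\le\sum_e|f_i(e)|$, where the sum runs over all tree edges.

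\emph{Step 2: compute $|f_i|$ on the three edge directions.*} For $e=\pm(1,0)$ we get $|f_1|=1$, $|f_2|=1$, $|f_3|=2$. For $e=\pm(0,1)$ we get $|f_1|=2$, $|f_2|=1$, $|f_3|=1$. For $e=\pm(1,-1)$ we get $|f_1|=1$, $|f_2|=2$, $|f_3|=1$. So each direction has value $2$ under exactly one functional and value $1$ under the other two. The direction-$(0,1)$ edges contribute $2$ to $f_1$, the direction-$(1,-1)$ edges contribute $2$ to $f_2$, and the direction-$(1,0)$ edges contribute $2$ to $f_3$. This gives
\[
w_1\le s+b,\qquad w_2\le s+c,\qquad w_3\le s+a.
\]

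\emph{Step 3: sum.} Adding the three bounds gives $w_1+w_2+w_3\le 3s+a+b+c=4s$. This is weaker than the claim $3s+m$, so the main obstacle is to improve the crude sum. The idea is to show that for every $i$, $w_i\le s+m$, and then to recover $3s+m$ overall.

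The sharper tool is to look at the path $\pi_i$ realising $w_i$. Write $\alpha_i,\beta_i,\gamma_i$ for the numbers of edges of $\pi_i$ in the three directions. Then
\[
w_1\le\alpha_1+2\beta_1+\gamma_1
\]
and similarly for $w_2,w_3$. This is still not enough by itself, so use a symmetric identity. Since $f_1=f_3-f_2$ up to sign conventions (indeed $f_3-f_2=q+2r=f_1$), the functionals satisfy $f_1+f_2=f_3$. Hence for any edge vector $e$,
\[
|f_1(e)|+|f_2(e)|+|f_3(e)|=2\max_i|f_i(e)|,
\]
and the hexagonal norm $\max_i|f_i|$ of a sum of edges is at most (number of edges) plus (count of the largest direction). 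So the width sum is bounded by twice the $\max$-norm width. Concretely, I would instead bound $w_1+w_2+w_3$ directly: it equals the perimeter-type quantity $2\max$ over vertex pairs in the support-function sense. Then I would show that for any two tree vertices $x,y$ the difference $d=x-y$ satisfies
\[
|f_1(d)|+|f_2(d)|+|f_3(d)|\le 2\,(\text{path length}) \le \dots,
\]
and I would handle the three extremal pairs by combining paths through a common branching vertex (a Steiner point of the three or six extremal vertices in the tree). The union of these paths is a subtree with at most $s$ edges. Each edge is counted in up to two or three of the paths, with weights $1,1,2$. A careful accounting should show that the total weight is at most $3$ per edge plus at most $1$ extra per edge in a single direction, which gives $3s+m$. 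I expect this counting over the subtree spanned by the six extremal vertices to be the main difficulty. The step I would try first is to write
\[
w_1+w_2+w_3=\sum_i\bigl(f_i(x_{k_i})-f_i(x_{l_i})\bigr)
\]
and pair terms so that each tree edge contributes $\sum_i\pm f_i(e)$ with consistent signs, which is at most $|f_1(e)|+|f_2(e)|+|f_3(e)|\le4$. Equality with $4$ forces agreement of signs, and I would show that this can happen for only one direction class at a time.
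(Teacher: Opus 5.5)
\paragraph{Gap: the claim that only one direction class can contribute $4$ is never proved.} Your last paragraph sets up the right decomposition, and it is the one the paper uses. Let $Q_i$ be the tree path from a cell minimising $f_i$ to one maximising it. Then $w_1+w_2+w_3=\sum_e c(e)$, where $c(e)$ sums the signed increments along $e$ of those $f_i$ whose path uses $e$. You should also state that $c(e)\le 3$ unless $e$ lies on all three paths with all three increments positive: dropping a term leaves at most $2+1$, and a negative term leaves at most $2$.

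The whole content of the lemma, however, is the sentence you defer: that edges with $c(e)=4$ occur in only one direction class. You give no argument for it, and without it you are left with the bound $4s$. The missing idea is that tree paths force a consistent orientation. Suppose $e$ and $e'$ both lie on all three paths, with $e$ before $e'$ on $Q_3$. Deleting both edges splits the tree into three parts, and any path using both edges has its ends in the two outer parts. So $Q_1$ runs either with $Q_3$ on both edges or against it on both, and the same holds for $Q_2$. Orient common edges along $Q_3$. Each is then a step $(1,0)$, $(0,1)$ or $(1,-1)$, with $(f_1,f_2)$-increments $(1,1)$, $(2,-1)$, $(-1,2)$. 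These three sign patterns are different, and an edge with $c(e)=4$ needs its pattern to match the fixed pair of orientation choices. Hence at most $m$ edges have $c(e)=4$, and $w_1+w_2+w_3\le 4m+3(s-m)=3s+m$.

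\paragraph{The detours in Step~3 should be dropped.} Bounding each $w_i$ by $s+m$ gives only $3s+3m$. The pointwise identity $|f_1|+|f_2|+|f_3|=2\max_i|f_i|$ does not make the width sum equal to, or at most, twice the largest width. The reason is that the three widths are attained at different pairs of cells. For the polyhex $\{(0,0),(1,0),(0,1)\}$ all three widths are $2$, so the sum is $6$ while twice the largest width is $4$. In fact $f_3=f_1+f_2$ gives $2\max_i w_i\le w_1+w_2+w_3$ in general. This mismatch between the three extremal pairs is exactly what the orientation argument above has to overcome.
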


\begin{proof}
For each $i$ choose cells $N_i$ and $M_i$ of $S$ where $f_i$ takes its smallest and largest
values. Let $Q_i$ be the tree path from $N_i$ to $M_i$. Then $w_i$ is the sum of the increments of
$f_i$ along $Q_i$. We group the three sums by tree edge and get $w_1+w_2+w_3=\sum_ec(e)$. Here
$c(e)$ is the sum of the signed increments of those $f_i$ whose path $Q_i$ uses $e$.

For a unit step, the absolute increments of $(f_1,f_2,f_3)$ are a permutation of $(1,1,2)$, so
$c(e)\le4$. All increments are integers. The equality $c(e)=4$ holds only when $e$ lies on all
three paths and all three signed increments are positive. In every other case $c(e)\le3$. We call
$e$ full when $c(e)=4$.

We show that all full edges are parallel. The edges that the three paths have in common form a
path, and each $Q_i$ runs through this common path in one direction. We orient the common path
along $Q_3$. Then $Q_1$ runs either with it or against it, and the same holds for $Q_2$. These two
choices are the same for every common edge. The steps with a positive $f_3$ increment are $(1,0)$,
$(0,1)$ and $(1,-1)$. Their increments of $(f_1,f_2)$ are $(1,1)$, $(2,-1)$ and $(-1,2)$, and these
three pairs have different sign patterns. A full edge needs the signs of its $f_1$ and $f_2$
increments to match the two fixed choices. Hence at most one direction class contains full edges.
There are at most $m$ full edges, and so $w_1+w_2+w_3\le4m+3(s-m)=3s+m$.
\end{proof}

\begin{remark}
The formal proof replaces the tree by the parent function of Lemma~\ref{lem:treearea}. The tree
path from $N$ to $M$ is described by the chains of ancestors of $N$ and $M$. An edge lies on the
path exactly when it lies in one of the two chains and not in the other. Any two edges in one chain
are comparable, and this fact gives the claim about full edges directly.
\end{remark}

\section{Proof of Theorem~\ref{thm:main}}\label{sec:proof}

\subsection*{Upper bound}
By Lemmas~\ref{lem:area}, \ref{lem:treearea} and~\ref{lem:width},
\[
\area(P+H)\le\frac{ab+bc+ca}{2}+1+s+\frac m3.
\]
Six times the right side is the integer $L=3(ab+bc+ca)+6+6s+2m$. The expression is symmetric in
$a$, $b$ and $c$, so we may assume $c=m$. We write $a=c-x$ and $b=c-y$ with integers $x,y\ge0$. A
direct expansion gives
\[
3(s+1)^2+14(s+1)+2-3L=3(x^2-xy+y^2)-2x-2y+1.
\]
The right side equals $\frac32(x-y)^2+\frac12(x-1)(3x-1)+\frac12(y-1)(3y-1)$. For every integer
$x\ge0$ we have $(x-1)(3x-1)\ge0$, so the right side is nonnegative. Hence $3L\le3n^2+14n+2$. For
the integer $L$ this means $L\le\lceil n^2+\frac{14}{3}n\rceil$, and so
$M(n)\le\frac16\lceil n^2+\frac{14}{3}n\rceil$.

\subsection*{Construction}
Choose integers $a,b,c\ge0$ with $a+b+c=n-1$ such that any two of them differ by at most one and
$b$ is the largest. Start at $(0,0)$ and take $a$ steps $(0,1)$. Then take $b$ steps $(1,0)$, and
then take $c$ steps $(1,-1)$. The $n$ centres are distinct, and they form a polyhex in the shape of
a path with three straight runs. Its centre hull is the quadrilateral with vertices $(0,0)$,
$(0,a)$, $(b,a)$ and $(b+c,a-c)$, and it has area $\frac12(ab+bc+ca)$. Its widths are $w_1=2a+b$,
$w_2=b+2c$ and $w_3=a+2b+c$, so $w_1+w_2+w_3=3s+b$. By Lemma~\ref{lem:area} the hull of this
polyhex has area $\frac12(ab+bc+ca)+1+s+\frac b3$. We compute this value in the three residue
classes of $s$ modulo $3$. For $s=3k$, $3k+1$ and $3k+2$, six times the value is $9k^2+20k+6$,
$9k^2+26k+14$ and $9k^2+32k+23$. In each case this equals
$\lceil n^2+\frac{14}{3}n\rceil$ with $n=s+1$. We believe that this shape is the washtub of
Steffanov\'a~\cite{Steffanova}, and for $n=10$ both hulls have area $\frac{49}{2}$.

\section{Formal verification}\label{sec:lean}

We formalised the upper bound in Lean~4 with the Mathlib library. The code is available at
\url{https://github.com/pragyaangaur/Polyhex-Hull}. In the formal statement a cell is a point of
$\mathbb Z\times\mathbb Z$ in axial coordinates. A polyhex is a nonempty finite set of cells whose
induced subgraph in the adjacency graph is connected. The region of a polyhex is the union of the
translates of $H$, and area is Lebesgue measure on $\mathbb R\times\mathbb R$. The theorem
\texttt{kurz\_conjecture\_sharp} states
\[
\operatorname{vol}\bigl(\conv(\text{region }S)\bigr)\le\frac16\left\lceil n^2+\frac{14}{3}n\right\rceil,
\]
and the theorem \texttt{kurz\_conjecture} states the bound of Conjecture~\ref{conj:kurz}. The
development also proves the following facts, which show that the model matches the conjecture.
\begin{itemize}
\item The cell $H$ has area exactly one (\texttt{volume\_hexagon}).
\item The map $(q,r)\mapsto\kappa\,(q+\frac r2,\frac{\sqrt3}{2}r)$ with $\kappa^2=2/\sqrt3$ has
determinant one. The images of two neighbouring vertices of $H$ differ by a rotation of sixty
degrees about the centre, so the image of $H$ is a regular hexagon of area one
(\texttt{euclid\_hexVertex\_succ} and \texttt{volume\_euclid\_hexagon}).
\item The bound holds for the images of the cells in the Euclidean plane
(\texttt{kurz\_conjecture\_euclid}).
\end{itemize}
The development contains no unproved steps. The main theorems use only the three standard axioms
of Lean, which are \texttt{propext}, \texttt{Quot.sound} and \texttt{Classical.choice}.

The formal proof follows Sections~\ref{sec:setup} to~\ref{sec:proof} with three changes.
\begin{itemize}
\item It uses only the inequality in Lemma~\ref{lem:area}.
\item It proves Lemmas~\ref{lem:segment} and~\ref{lem:insert} from one slicing lemma. The slicing
lemma treats stretching along the horizontal axis, and a linear map of determinant one moves it to
other directions.
\item It replaces the spanning tree by an ordering of the cells in which each cell after the first
is adjacent to an earlier cell.
\end{itemize}
The construction in Section~\ref{sec:proof} is not part of the formal proof.

\section{Computations}\label{sec:computation}

We listed all fixed polyhexes with at most $12$ cells with Redelmeier's
algorithm~\cite{Redelmeier}. The program is \texttt{scripts/brute.py} in the repository above. The
numbers of fixed polyhexes agree with sequence A001207 of the OEIS~\cite{OEIS}. For each polyhex we
computed the area of the convex hull of all $6n$ corners of its cells in exact rational
arithmetic. This computation does not use any lemma of this paper. Table~\ref{tab:data} lists the
results. The largest area always equals $M(n)$, and no polyhex has a larger area than
Kurz's bound.

\begin{table}[ht]
\centering
\begin{tabular}{rrrrr}
\toprule
$n$ & fixed polyhexes & $6M(n)$ & maximisers (fixed) & maximisers (free) \\
\midrule
1 & 1 & 6 & 1 & 1 \\
2 & 3 & 14 & 3 & 1 \\
3 & 11 & 23 & 6 & 1 \\
4 & 44 & 35 & 6 & 1 \\
5 & 186 & 49 & 6 & 1 \\
6 & 814 & 64 & 12 & 1 \\
7 & 3652 & 82 & 6 & 1 \\
8 & 16689 & 102 & 6 & 1 \\
9 & 77359 & 123 & 12 & 1 \\
10 & 362671 & 147 & 6 & 1 \\
11 & 1716033 & 173 & 6 & 1 \\
12 & 8182213 & 200 & 12 & 1 \\
\bottomrule
\end{tabular}
\caption{The table lists the results of the search over all polyhexes with at most $12$ cells.
Fixed polyhexes are counted up to translation, and free polyhexes are counted up to the twelve
symmetries of the lattice.}\label{tab:data}
\end{table}

For every $n\le12$ only one free polyhex reaches the maximum. Polyominoes behave very differently,
because the number of polyominoes that reach the maximum grows like $n^3$~\cite{Kurz}.

\section{Open problems}

\begin{conjecture}
For every $n\ge1$ the washtub is the only polyhex with $n$ cells whose convex hull has area $M(n)$,
up to rotation, reflection and translation.
\end{conjecture}

A proof could start from the cases of equality in Lemmas~\ref{lem:treearea} and~\ref{lem:width}.
Other natural questions are listed below.
\begin{itemize}
\item Find a stability version of Theorem~\ref{thm:main} for polyhexes whose hull area is close to
$M(n)$.
\item Determine the set of all hull areas of polyhexes with $n$ cells.
\item Solve the same extremal problem for tiles that are affine images of the regular hexagon.
\end{itemize}

\section*{Acknowledgements}
The first version of the proof in Sections~\ref{sec:setup} to~\ref{sec:proof} was produced by the
Principia Math project (\url{https://principia-math.com}), where the problem is listed as
MathDB~380445. The author checked the argument and gave new proofs of Lemma~\ref{lem:insert} and of
the arithmetic step. The author also carried out the formal verification and the computations.
Generative AI tools helped to write the Lean code and the enumeration program. The author is
responsible for all the content of this paper.

\end{document}